\newcommand\cyr{%
\renewcommand\rmdefault{wncyr}%
\renewcommand\sfdefault{wncyss}%
\renewcommand\encodingdefault{OT2}%
\normalfont
\selectfont}
\DeclareTextFontCommand{\textcyr}{\cyr}
\DeclareFontFamily{OT1}{rsfs}{}
\DeclareFontShape{OT1}{rsfs}{n}{it}{<-> rsfs10}{}
\DeclareMathAlphabet{\mathscr}{OT1}{rsfs}{n}{it}
\numberwithin{equation}{section}
\newtheorem{theorem}{Theorem}[section]
\newtheorem{lemma}[theorem]{Lemma}
\newtheorem{question}{Question}
\theoremstyle{definition}
\newtheorem{definition}[theorem]{Definition}
\newtheorem{remark}[theorem]{Remark}
\theoremstyle{remark}
\newtheorem{example}[theorem]{Example}
\newtheorem{acknowledgement}{Acknowledgement}
\newcommand{\fm}{\frak{m}}
\newcommand{\fq}{\frak{q}}
\newcommand{\fa}{\frak{a}}
\begin{document}
\title[Upper bound of multiplicity]{Upper bound of multiplicity in prime characteristic}

\author[Duong Thi Huong]{Duong Thi Huong}
\address{Department of Mathematics, Thang Long University, Hanoi, Vietnam}
\email{duonghuongtlu@gmail.com}

\author[Pham Hung Quy]{Pham Hung Quy}
\address{Department of Mathematics, FPT University, Hanoi, Vietnam}
\email{quyph@fe.edu.vn}

\thanks{2010 {\em Mathematics Subject Classification\/}: 13H15, 13A35.\\
The second author is partially supported by a fund of Vietnam National Foundation for Science
and Technology Development (NAFOSTED) under grant number
101.04-2017.10.}

\keywords{Multiplicity, The Frobenius test exponent, $F$-nilpotent.}

\begin{abstract} Let $(R, \fm)$ be a local ring of prime characteristic $p$ and of dimension $d$ with the embedding dimension $v$. Suppose the Frobenius test exponent for parameter ideals $Fte(R)$ of $R$ is finite, and let $Q = p^{Fte(R)}$. It is shown that 
$$e(R) \le Q^{v-d} \binom{v}{d}.$$
We also improve the bound for $F$-nilpotent rings. Our result extends the main results of Huneke and Watanabe \cite{HW15} and of Katzman and Zhang \cite{KZ18}.
\end{abstract}

\maketitle

\section{Introduction}
Throughout this paper, let $(R, \fm)$ be a Noetherian commutative local ring of prime characteristic $p>0$ and of dimension $d$. Our work is inspired by the work of Huneke and Watanabe \cite{HW15} in what they gave an upper bound of the multiplicity $e(R)$ of an $F$-pure ring $R$ in terms of the embedding dimension $v$. Namely, Huneke and Watanabe proved that 
$$e(R) \le \binom{v}{d}$$
for any $F$-pure ring. If $R$ is $F$-rational, the authors of \cite{HW15} provided a better bound that $e(R) \le \binom{v-1}{d-1}$ (cf. \cite[Theorem 3.1]{HW15}). Recently, Katzman and Zhang tried to remove the $F$-pure condition in Huneke-Watanabe's theorem by using the Hartshorne-Speiser-Lyubeznik number $HSL(R)$. Notice that $HLS(R) = 0$ if $R$ is $F$-injective (e.g. $R$ is $F$-pure). If $R$ is Cohen-Macaulay, Katzman and Zhang \cite[Theorem 3.1]{KZ18} proved the following inequality
$$e(R) \le Q^{v-d} \binom{v}{d},$$
where $Q = p^{HSL(R)}$. They also constructed examples to show that their bound is asymptotically sharp (cf. \cite[Remark 3.2]{KZ18}).\\

The key ingredient of this paper is the Frobenius test exponent for parameter ideals of $R$. Recall that the Frobenius test exponent for parameter ideals of $R$, denoted by $Fte(R)$, is the least integer (if exists) $e$ satisfying that $(\fq^F)^{[p^e]} = \fq^{[p^e]}$ for every parameter ideal $\fq$, where $\fq^F$ is the Frobenius closure of $\fq$. It is asked by Katzman and Sharp that whether $Fte(R) < \infty$ for every (equidimensional) local ring (cf. \cite{KS06}). If $R$ is Cohen-Macaulay then $Fte(R) = HSL(R)$. Moreover the question of Katzman and Sharp has affirmative answers when $R$ is either generalized Cohen-Macaulay by \cite{HKSY06} or $F$-nilpotent by \cite{Q19} (see the next section for the details). The main result of the present paper is as follows.
\begin{theorem}\label{MT} Let $(R, \fm)$ be a local ring of dimension $d$ with the embedding dimension $v$. Then
\begin{enumerate}
\item If $R$ is $F$-nilpotent then 
$$e(R) \le Q^{v-d}\binom{v-1}{d-1},$$ 
where $Q = p^{Fte(R)}$.
\item Suppose $Fte(R) < \infty$. Then 
$$e(R) \le Q^{v-d}\binom{v}{d},$$ 
where $Q = p^{Fte(R)}$.
\end{enumerate}

\end{theorem}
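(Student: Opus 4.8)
**The plan is to reduce the multiplicity bound to a length count of a specific Artinian quotient, following the strategy behind Huneke–Watanabe and Katzman–Zhang but replacing their hypotheses with the finiteness of the Frobenius test exponent.**

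The plan is to compute the multiplicity via a system of parameters and compare it with the colength of the corresponding parameter ideal. Choose a minimal reduction, i.e. a system of parameters $x_1,\dots,x_d$ generating a parameter ideal $\fq$ that is a reduction of $\fm$, so that $e(R)=e(\fq;R)\le \length(R/\fq)$ with equality when $R$ is Cohen--Macaulay. The key idea is to pass to Frobenius powers: since $Fte(R)<\infty$, the Frobenius closure of every parameter ideal is captured at level $Q=p^{Fte(R)}$, meaning $(\fq^F)^{[Q]}=\fq^{[Q]}$. I would like to exploit the fact that the ideal $\fq^{[Q]}$ is generated by $x_1^Q,\dots,x_d^Q$, and that $\fm^{[Q]}\subseteq \fq^{[Q]}+(\text{lower order terms})$, so that passing through the Frobenius controls how far $\fm$ sits above $\fq$.

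The central computation I would carry out is to bound $\length(R/\fq^{[Q]})$. First I would observe that $\fm$ is minimally generated by $v$ elements, so $R/\fm^{[Q]}$ is a quotient of a polynomial-ring-type object whose monomials of degree at most $v(Q-1)$ in $v$ variables give a crude count; more precisely, since $\fq$ is a reduction of $\fm$, one has $\fm^{r+1}=\fq\fm^r$ for some $r$, and Frobenius powers interact multiplicatively with the reduction. I expect the binomial factor $\binom{v}{d}$ to arise from choosing which $d$ of the $v$ generators play the role of the parameters, and the factor $Q^{v-d}$ to arise from the $v-d$ remaining generators each contributing a range of size $Q$ once we work modulo $\fq^{[Q]}$. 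The relation $e(\fq;R)=Q^{-d}e(\fq^{[Q]};R)$ together with $e(\fq^{[Q]};R)\le \length(R/\fq^{[Q]})$ should then yield
\begin{equation*}
e(R)\le Q^{-d}\length(R/\fq^{[Q]})\le Q^{-d}\cdot Q^{v}\binom{v}{d}=Q^{v-d}\binom{v}{d},
\end{equation*}
after bounding $\length(R/\fq^{[Q]})$ by $Q^v\binom{v}{d}$.

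The hard part will be controlling the length of $R/\fq^{[Q]}$ without the Cohen--Macaulay hypothesis that Katzman--Zhang relied on: in their setting $\length(R/\fq^{[Q]})=Q^d\,\length(R/\fq)$ exactly, but in general this fails and one only has inequalities. The role of $Fte(R)$ is precisely to convert the Frobenius closure $\fq^F$, which controls the colon ideals measuring the failure of $\fq$ to be a proper parameter ideal, into the honest Frobenius power $\fq^{[Q]}$ at a uniform level $Q$. I anticipate that the key technical lemma is a statement to the effect that $\fm^{[Q]}\subseteq (x_1,\dots,x_d)^{[Q]}+\fm^{[Q]}\fm$, or an associated-graded argument showing the generators beyond the parameters contribute at most a factor $Q$ each; the $F$-nilpotent refinement in part (1), giving $\binom{v-1}{d-1}$ instead of $\binom{v}{d}$, should come from the sharper $F$-rational-type estimate of Huneke--Watanabe, where $F$-nilpotence forces the tight closure of $\fq$ to coincide with $\fq$ up to the same Frobenius level and removes one generator from the count.
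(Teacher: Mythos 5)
Your overall framework matches the paper's: pick a minimal reduction $\fq=(x_1,\dots,x_d)$ of $\fm$, use $(\fq^F)^{[Q]}=\fq^{[Q]}$, and conclude from $e(R)=Q^{-d}e(\fq^{[Q]})\le Q^{-d}\length(R/\fq^{[Q]})$. But the step you defer as ``the hard part'' --- bounding $\length(R/\fq^{[Q]})$ --- is the entire content of the argument, and the lemma you anticipate for it is not the right one. The actual key input is a Brian\c{c}on--Skoda type statement: in general $\overline{\fq^{d+1}}\subseteq\fq^F$ (Katzman--Zhang), and for $F$-nilpotent rings $\overline{\fq^{d}}\subseteq\fq^*=\fq^F$ (the Hochster--Huneke Brian\c{c}on--Skoda theorem combined with the characterization of $F$-nilpotence by $\fq^*=\fq^F$). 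Since $\fq$ is a reduction of $\fm$, this gives $\fm^{d+1}\subseteq\overline{\fm^{d+1}}=\overline{\fq^{d+1}}\subseteq\fq^F$ (resp.\ $\fm^{d}\subseteq\fq^F$), and applying the Frobenius test exponent yields $(\fm^{d+1})^{[Q]}\subseteq\fq^{[Q]}$ (resp.\ $(\fm^{d})^{[Q]}\subseteq\fq^{[Q]}$). This inclusion is exactly what makes the monomial count work: extending $x_1,\dots,x_d$ to a minimal generating set $x_1,\dots,x_d,y_1,\dots,y_{v-d}$ of $\fm$, the quotient $R/\fq^{[Q]}$ is spanned by monomials $x_1^{\alpha_1}\cdots x_d^{\alpha_d}y_1^{\beta_1Q+\gamma_1}\cdots y_{v-d}^{\beta_{v-d}Q+\gamma_{v-d}}$ with $0\le\alpha_i,\gamma_j<Q$ and $\beta_1+\cdots+\beta_{v-d}\le d$ (resp.\ $\le d-1$), because any product of $d+1$ (resp.\ $d$) generators of $\fm$, each raised to the power $Q$, lies in $\fq^{[Q]}$. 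The binomial coefficient is then the number of exponent vectors $(\beta_1,\dots,\beta_{v-d})$ with bounded sum, namely $\binom{v}{d}$ (resp.\ $\binom{v-1}{d-1}$); it does not arise from ``choosing which $d$ of the $v$ generators play the role of the parameters.''

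Moreover, the substitute lemma you propose, $\fm^{[Q]}\subseteq\fq^{[Q]}+\fm\cdot\fm^{[Q]}$, cannot be the right statement: by Nakayama it would force $\fm^{[Q]}\subseteq\fq^{[Q]}$, hence $\length(R/\fq^{[Q]})\le Q^v$ and $e(R)\le Q^{v-d}$, which already fails for any non-regular $F$-pure Cohen--Macaulay ring (there $Q=1$ but $e(R)>1$). Likewise, the identity $\length(R/\fq^{[Q]})=Q^d\length(R/\fq)$ you attribute to the Cohen--Macaulay case is not used and not needed; the paper only uses the inequality $e(\fq^{[Q]})\le\length(R/\fq^{[Q]})$, valid for any parameter ideal. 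So the skeleton is correct, but without the Brian\c{c}on--Skoda input the proof does not close.
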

We will prove the above theorem in the last section. In the next section we collect some useful materials.
\section{Preliminaries}
\noindent {\bf $F$-singularities.} We firstly give the definition of the tight closure and the Frobenius closure of ideals.
\begin{definition}[\cite{HH90,H96}]
Let $R$ have characteristic $p$. We denote by $R^{\circ}$ the set of elements
of $R$ that are not contained in any minimal prime ideal.Then for any ideal $I$ of $R$ we define
\begin{enumerate}
  \item The {\it Frobenius closure} of $I$, $I^F = \{x \mid  x^Q \in I^{[Q]} \text{ for some } Q = p^e\}$, where $I^{[Q]} = (x^Q \mid x \in I)$.
  \item The {\it tight closure} of $I$, $I^* = \{x \mid cx^Q \in I^{[Q]} \text{ for some } c \in R^{\circ} \text{ and for all } Q = p^e \gg 0\}$.
\end{enumerate}
\end{definition}
We next recall some classes of $F$-singularities mentioned in this paper.
\begin{definition}
A local ring $(R, \fm)$ is called {\it $F$-rational} if it is a homomorphic image of a Cohen-Macaulay local ring and every parameter ideal is tight closed, i.e. $\fq^* = \fq$ for all $\fq$.
\end{definition}
\begin{definition}
A local ring $(R, \fm)$ is called {\it $F$-pure} if the Frobenius endomorphism $F: R \to R, x \mapsto x^p$ is a pure homomorphism. If $R$ is $F$-pure, then it is proved that every ideal $I$ of $R$ is Frobenius closed, i.e. $I^F = I$ for all $I$.
\end{definition}
The Frobenius endomorphism of $R$ induces the natural Frobenius action on local cohomology $F: H^i_{\fm}(R) \to H^i_{\fm}(R)$ for all $i \ge 0$. By a similar way, we can define the Frobenius closure and tight closure of zero submodule of local cohomology, and denote by $0^F_{H^i_{\fm}(R)}$ and $0^*_{H^i_{\fm}(R)}$ respectively. 
\begin{definition}
\begin{enumerate}
\item A local ring $(R, \fm)$ is called {\it $F$-injective} if the Frobenius action on $H^i_{\fm}(R)$ is injective, i.e. $0^F_{H^i_{\fm}(R)} = 0$, for all $i \ge 0$.
\item  A local ring $(R, \fm)$ is called {\it $F$-nilpotent} if the Frobenius actions on all lower local cohomologies $H^i_{\fm}(R)$, $i \le d-1$, and $0^*_{H^d_{\fm}(R)}$ are nilpotent, i.e. $0^F_{H^i_{\fm}(R)} = H^i_{\fm}(R)$ for all $i \le d-1$ and $0^F_{H^d_{\fm}(R)} = 0^*_{H^d_{\fm}(R)}$.
\end{enumerate}
\end{definition}
\begin{remark}\label{F-sing}
\begin{enumerate}
\item It is well known that an equidimensional local ring $R$ is $F$-rational if and only if it is Cohen-Macaulay and $0^*_{H^d_{\fm}(R)} = 0$.
\item An excellent equidimensional local ring is $F$-rational if and only if it is both $F$-injective and $F$-nilpotent.
\item Suppose every parameter ideal of $R$ is Frobenius closed. Then $R$ is $F$-injective (cf. \cite[Main Theorem A]{QS17}). In particular, an $F$-pure ring is $F$-injective.
\item An excellent equidimensional local ring $R$ is $F$-nilpotent if and only if $\fq^* = \fq^F$ for every parameter ideal $\fq$ (cf. \cite[Theorem A]{PQ18}). 
\end{enumerate}
\end{remark}
\noindent {\bf $F$-invariants.} We will bound the multiplicity of a local ring of prime characteristic in terms of the Frobenius test exponent for parameter ideals of $R$. Let $I$ be an ideal of $R$. The {\it Frobenius test exponent} of $I$, denoted by $Fte(I)$, is the smallest number $e$ satisfying that $(I^F)^{[p^e]} = I^{[p^e]}$. By the Noetherianess of $R$, $Fte(I)$ exists (and depends on $I$). In general, there is no upper bound for the Frobenius test exponents of all ideals in a local ring by the example of Brenner \cite{B06}. In contrast, Katzman and Sharp \cite{KS06} showed the existence of a uniform bound of Frobenius test exponents if we restrict to the class of parameter ideals in a Cohen-Macaulay local ring. For any local ring $(R, \fm)$ of prime characteristic $p$ we define the {\it Frobenius test exponent for parameter ideals}, denoted by $Fte(R)$, is the smallest integer $e$ such that $(\fq^F)^{[p^e]} = \fq^{[p^e]}$ for every parameter ideal $\fq$ of $R$, and $Fte(R) = \infty$ if we have no such integer. Katzman and Sharp raised the following question.
\begin{question}\label{Question}
Is $Fte(R)$ a finite number for any (equidimensional) local ring?
\end{question}
The Frobenius test exponent for parameter ideals is closely related to an invariant defined by the Frobenius actions on the local cohomology modules $H^i_{\fm}(R)$, namely {\it the Hartshorne-Speiser-Lyubeznik number} of $H^i_{\fm}(R)$. The Hartshorne-Speiser-Lyubeznik number of $H^i_{\fm}(R)$ is a nilpotency index of Frobenius action on $H^i_{\fm}(R)$ and it is defined as follows 
$$HSL(H^i_{\fm}(R)) = \min \{e \mid F^e(0^F_{H^i_{\fm}(R)}) = 0  \}.$$
By \cite[Proposition 1.11]{HS77} and \cite[Proposition 4.4]{L97} $HSL(H^i_{\fm}(R))$ is well defined (see also \cite{Sh07}). The Hartshorne-Speiser-Lyubeznik number of $R$ is $HSL(R) = \max \{HSL(H^i_{\fm}(R)) \mid i = 0, \ldots, d\}$.
\begin{remark}\label{Fte}
\begin{enumerate}
\item If $R$ is Cohen-Macaulay then $Fte(R) = HSL(R)$ by Katzman and Sharp \cite{KS06}. In general, the authors of this paper proved in \cite{HQ19} that $Fte(R) \ge HSL(R)$. Moreover, Shimomoto and the second author \cite[Main Theorem B]{QS17} constructed a local ring satisfying that $HSL(R) = 0$, i.e. $R$ is $F$-injective, but $Fte(R) > 0$.
\item Huneke, Katzman, Sharp and Yao \cite{HKSY06} gave an affirmative answer for Question \ref{Question} for generalized Cohen-Macaulay rings. 
\item Recently, the second author \cite{Q19} provided a simple proof for the theorem of Huneke, Katzman, Sharp and Yao. By the same method he also proved that $Fte(R) < \infty$ if $R$ is $F$-nilpotent. Very recently, Maddox \cite{M18} extended this result for {\it generalized $F$-nilpotent} rings.
\end{enumerate}

\end{remark}

\section{Proof of the main result}
This section is devoted to prove the main result of this paper. Without loss of generality we will assume that $R$ is complete with an infinite residue field. We need the following key lemma.
\begin{lemma}\label{briancon} Let $(R, \fm)$ be a local ring of dimension $d$, and $\fq$ a parameter ideal. 
\begin{enumerate}
\item If $R$ is $F$-nilpotent then $\overline{\fq^d} \subseteq \fq^F$, where $\overline{I}$ is the integral closure of ideal $I$.
\item In general we have $\overline{\fq^{d+1}} \subseteq \fq^F$.
\end{enumerate}
\end{lemma}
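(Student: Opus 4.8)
The engine for both parts is the tight-closure form of the Brian\c con--Skoda theorem of Hochster--Huneke: an ideal generated by $n$ elements satisfies $\overline{I^n}\subseteq I^*$. Since a parameter ideal $\fq$ in a $d$-dimensional ring is generated by $d$ elements, this already gives $\overline{\fq^d}\subseteq\fq^*$. I would first recall the elementary mechanism behind it, since the same bookkeeping drives part (2): by the valuative criterion for integral closure there is a fixed $c\in R^{\circ}$ with $cz^q\in\fq^{dq}$ for every $q=p^e$ and every $z\in\overline{\fq^d}$, and a pigeonhole count gives $\fq^{dq}\subseteq\fq^{[q]}$, because any monomial of degree $dq$ in the $d$ generators has some exponent at least $q$. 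Hence $cz^q\in\fq^{[q]}$ for all $q$, that is, $z\in\fq^*$.

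With this in hand, part (1) is immediate. Since $R$ is complete it is excellent, and under the standing reductions we may take it equidimensional, so Remark~\ref{F-sing}(4) applies and yields $\fq^*=\fq^F$ for every parameter ideal. Combining the two gives $\overline{\fq^d}\subseteq\fq^*=\fq^F$, which is the assertion of (1).

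For part (2) the identity $\fq^*=\fq^F$ is no longer available, so I would not route the argument through tight closure but instead exploit the extra power to land directly in the Frobenius closure. The point is that replacing $d$ by $d+1$ buys one additional factor: a sharper pigeonhole shows $\fq^{(d+1)q}\subseteq(\fq^{[q]})^2=(\fq^2)^{[q]}$, since a monomial of degree $(d+1)q$ in $d$ variables either has a single exponent $\ge 2q$ or two exponents $\ge q$, and in both cases it is divisible by a product of two $q$-th powers of generators. Writing $cz^q\in(\fq^{[q]})^2$ for $z\in\overline{\fq^{d+1}}$, the plan is to spend this extra factor of $\fq^{[q]}$ to clear the multiplier $c$ and produce a single $Q=p^e$ with $z^Q\in\fq^{[Q]}$.

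The crux, and the step I expect to be the main obstacle, is precisely this removal of $c$: passing from a tight-closure-type containment valid for all $q$ to an honest membership $z^Q\in\fq^{[Q]}$ for one $Q$. This gap is exactly the difference between $\fq^*$ and $\fq^F$, and it cannot be closed by the monomial bookkeeping alone (note that one cannot shortcut the argument through a reduction formula such as $\overline{\fq^{d+1}}=\fq\,\overline{\fq^d}$, which would force the far stronger $\overline{\fq^{d+1}}\subseteq\fq$ and fails for parameter ideals of large reduction number). I would therefore absorb $c$ using the direct-limit description of the Frobenius closure of a parameter ideal through the top local cohomology $H^d_{\fm}(R)=\varinjlim R/\fq^{[q]}$ with its natural Frobenius action---the very object governing $HSL$ and $Fte$ in Section~2---where the additional factor supplied by the $(d+1)$st power provides the room to clear $c$ after finitely many applications of Frobenius. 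As a consistency check, the bookkeeping above unconditionally yields the weaker $\overline{\fq^{d+1}}\subseteq\fq^*$, and upgrading this tight-closure containment to $\fq^F$ is exactly the content that the extra power must deliver.
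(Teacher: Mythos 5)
Your part (1) is correct and is exactly the paper's argument: the Brian\c con--Skoda theorem of Hochster--Huneke gives $\overline{\fq^d}\subseteq\fq^*$, and Remark~\ref{F-sing}(4) converts $\fq^*$ into $\fq^F$ for an ($F$-nilpotent, excellent, equidimensional) ring.

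Part (2), however, is not a proof: you correctly identify the removal of the multiplier $c$ as ``the crux and the main obstacle,'' and then you do not overcome it. The proposal to ``absorb $c$ \ldots after finitely many applications of Frobenius'' inside $H^d_{\fm}(R)=\varinjlim R/\fq^{[q]}$ is exactly the passage from $\fq^*$ to $\fq^F$ that fails in general (this is why $0^*_{H^d_{\fm}(R)}$ and $0^F_{H^d_{\fm}(R)}$ differ and why $Fte(R)$ can be positive or even conjecturally infinite); nothing in your bookkeeping explains why the extra factor of $\fq^{[q]}$ would let local cohomology clear $c$, and the containment $cz^q\in(\fq^{[q]})^2$ by itself only re-proves $z\in\fq^*$. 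The paper sidesteps all of this by simply invoking \cite[Theorem 2.2]{KZ18} (with a footnote reducing to the case where elements of $R^{\circ}$ are nonzerodivisors by passing to $R/\fa$ for $\fa$ the intersection of the minimal primary components of $(0)$). If you want a self-contained argument, the correct mechanism does not route through a multiplier at all: an equation of integral dependence of $z$ over $J=\fq^{d+1}$ of degree $m$ yields, by the standard induction, the \emph{multiplier-free} memberships $z^n\in J^{\,n-m+1}=\fq^{(d+1)(n-m+1)}$ for all $n\ge m$; taking $n=q=p^e$ with $q>(d+1)(m-1)$ gives $(d+1)(q-m+1)\ge dq+1$, and the pigeonhole you already used shows $\fq^{dq+1}\subseteq\fq^{[q]}$, whence $z^q\in\fq^{[q]}$ and $z\in\fq^F$. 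It is the degree of the integral equation, not the valuative multiplier $c$, that the extra power $d+1$ is designed to absorb; as your own remark about $\overline{\fq^{d+1}}\ne\fq\,\overline{\fq^d}$ suggests, no argument that keeps $c$ in play for all $q$ can close this gap.
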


\begin{proof} (1) By the Brian\c{c}on-Skoda type theorem \cite[Theorem 5.6]{HH90} we have $\overline{\fq^d} \subseteq \fq^*$. The assertion now follows from Remark \ref{F-sing}(4).\\
(2) The assertion follows from \cite[Theorem 2.2]{KZ18}\footnote{In fact Katzman and Zhang \cite[Theorem 2.2]{KZ18} needed to assume that every $c \in R^{\circ}$ is a non-zero divisor, i.e. $R$ has no embedded primes. However, we can easily remove this condition by passing to the quotient ring $R/\fa$, where $\fa$ is the intersection of primary ideals corresponding to minimal primes in a primary decomposition of the zero ideal.}.
\end{proof}
We prove the main result of this paper.
\begin{theorem}\label{T3.2} Let $(R, \fm)$ be a local ring of dimension $d$ with the embedding dimension $v$. Then
\begin{enumerate}
\item If $R$ is $F$-nilpotent then 
$$e(R) \le Q^{v-d}\binom{v-1}{d-1},$$ 
where $Q = p^{Fte(R)}$.
\item Suppose $Fte(R) < \infty$ (e.g. $R$ is generalized Cohen-Macaulay or generalized $F$-nilpotent). Then 
$$e(R) \le Q^{v-d}\binom{v}{d},$$ 
where $Q = p^{Fte(R)}$.
\end{enumerate}

\end{theorem}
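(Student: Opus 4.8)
The plan is to reduce everything to a monomial count in the Artinian quotient $R/\fq^{[Q]}$ for a well-chosen parameter ideal $\fq$, and then to feed that count into the standard behaviour of multiplicity under scaling.

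First I would fix a minimal reduction $\fq$ of $\fm$. Since (as already reduced to) $R$ is complete with infinite residue field, I can take $\fq$ generated by $d$ general $k$-linear combinations of a minimal generating set of $\fm$; after a linear change of generators I may assume $\fm = (y_1,\dots,y_v)$ and $\fq = (y_1,\dots,y_d)$, so that $\fq^{[Q]} = (y_1^Q,\dots,y_d^Q)$. Because $\fq$ is a reduction of $\fm$, multiplicity is preserved, $e(R) = e(\fq;R)$, and the integral closures of all powers agree, $\fm^{n}\subseteq\overline{\fm^{n}}=\overline{\fq^{n}}$. Lemma \ref{briancon}(2) then gives $\fm^{d+1}\subseteq\overline{\fq^{d+1}}\subseteq\fq^F$ in the general case, while Lemma \ref{briancon}(1) gives the sharper $\fm^{d}\subseteq\fq^F$ when $R$ is $F$-nilpotent.

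Next I would pass to bracket powers. Using the elementary identity $(\fm^{k})^{[Q]}=(\fm^{[Q]})^{k}$ and writing $J:=\fm^{[Q]}=(y_1^Q,\dots,y_v^Q)$, the containment raised to the $Q$-th bracket power yields $J^{d+1}\subseteq(\fq^F)^{[Q]}=\fq^{[Q]}$ in general and $J^{d}\subseteq\fq^{[Q]}$ in the $F$-nilpotent case; here the definition of $Fte(R)$ is exactly what collapses $(\fq^F)^{[Q]}$ onto $\fq^{[Q]}$. The two facts I will exploit are that $y_i^Q\in\fq^{[Q]}$ for $i\le d$, and that every product of $d+1$ (resp.\ $d$) of the elements $y_1^Q,\dots,y_v^Q$ lies in $\fq^{[Q]}$. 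To bound $\length(R/\fq^{[Q]})$ I count monomials: $R/\fq^{[Q]}$ is Artinian and spanned over the residue field by images of monomials $y^{\alpha}$, and I write $\alpha_i=Qb_i+s_i$ with $0\le s_i<Q$. If $b_i\ge 1$ for some $i\le d$ the monomial vanishes (as $y_i^Q\in\fq^{[Q]}$), so a surviving monomial has $b_i=0$ for $i\le d$; and if $\sum_{i>d}b_i\ge d+1$ (resp.\ $\ge d$) the monomial is divisible by $\prod_{i>d}y_i^{Qb_i}\in J^{d+1}$ (resp.\ $J^{d}$) and again vanishes. Thus the surviving monomials are indexed by $(s_1,\dots,s_v)\in\{0,\dots,Q-1\}^{v}$ together with $(b_{d+1},\dots,b_v)\in\BN^{v-d}$ satisfying $\sum_{i>d}b_i\le d$ (resp.\ $\le d-1$). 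Using that the number of lattice points of the simplex $\sum_{i=1}^{v-d}b_i\le n$ equals $\binom{v-d+n}{n}$, this gives $\length(R/\fq^{[Q]})\le Q^{v}\binom{v}{d}$ in general and $\le Q^{v}\binom{v-1}{d-1}$ in the $F$-nilpotent case.

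Finally I would convert colength into multiplicity. Since $\fq^{[Q]}$ is generated by the system of parameters $y_1^Q,\dots,y_d^Q$, one has $e(\fq^{[Q]};R)\le\length(R/\fq^{[Q]})$, and the scaling law $e(\fq^{[Q]};R)=Q^{d}e(\fq;R)=Q^{d}e(R)$ gives $e(R)\le Q^{-d}\length(R/\fq^{[Q]})$, which produces both claimed bounds. The essential input is the key Lemma; beyond it the only delicate point is the monomial bookkeeping, and in particular applying the reduction $b_i=0$ for $i\le d$ \emph{before} invoking the $J$-power relation, since this is precisely what yields the sharp constants $\binom{v}{d}$ and $\binom{v-1}{d-1}$ rather than a cruder bound such as $(d+1)^{v-d}$.
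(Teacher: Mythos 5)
Your proposal is correct and follows essentially the same route as the paper: a minimal reduction $\fq$ of $\fm$, the Brian\c{c}on--Skoda containment from Lemma \ref{briancon} combined with the definition of $Fte(R)$ to get $(\fm^d)^{[Q]}\subseteq\fq^{[Q]}$ (resp.\ $(\fm^{d+1})^{[Q]}\subseteq\fq^{[Q]}$), the same monomial count of $R/\fq^{[Q]}$ giving $Q^v\binom{v-1}{d-1}$ (resp.\ $Q^v\binom{v}{d}$), and the scaling $e(\fq^{[Q]})=Q^d e(R)\le\ell_R(R/\fq^{[Q]})$. The only cosmetic difference is that you phrase the key containment via the identity $(\fm^k)^{[Q]}=(\fm^{[Q]})^k$, which is exactly what the paper's explicit monomial spanning set encodes.
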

\begin{proof} Because the proofs of two assertions are almost the same, we will only prove (1). Since $R$ is $F$-nilpotent we have $Fte(R) < \infty$ by Remark \ref{Fte}(3). Let $\fq = (x_1, \ldots, x_d)$ be a minimal reduction of $\fm$. By Lemma \ref{briancon}(1) we have $\fm^d \subseteq \overline{\fm^d} = \overline{\fq^d} \subseteq \fq^F$. On the other hand we have $(\fq^F)^{[Q]} = \fq^{[Q]}$ by the definition of $Fte(R)$. Thus $(\fm^d)^{[Q]} \subseteq \fq^{[Q]}$. Extend $x_1, \ldots, x_d$ to a minimal set of generators $x_1, \ldots, x_d, y_1, \ldots, y_{v-d}$ of $\fm$. Now $R/\fq^{[Q]}$ is spanned by monomials
$$x_1^{\alpha_1}\cdots x_d^{\alpha_d} y_1^{\beta_1Q + \gamma_1} \cdots y_1^{\beta_{v-d}Q + \gamma_{v-d}},$$  
where $0 \le \alpha_1, \ldots, \alpha_d, \gamma_1, \ldots, \gamma_{v-d} < Q$ and $0 \le \beta_1 + \cdots + \beta_{v-d} < d$. The number of such monomials is $Q^v \binom{v-1}{d-1}$ so $\ell_R(R/\fq^{[Q]}) \le Q^v \binom{v-1}{d-1}$.\\
Since $\fq$ is a parameter ideal we have $e(\fq^{[Q]}) = Q^d e(\fq) = Q^d e(R)$ and $e(\fq^{[Q]}) \le \ell_R(R/\fq^{[Q]})$. Hence
$$e(R) = \frac{1}{Q^d} e(\fq^{[Q]}) \le \frac{1}{Q^d} \ell_R(R/\fq^{[Q]}) \le  \frac{1}{Q^d} Q^v \binom{v-1}{d-1} = Q^{v-d} \binom{v-1}{d-1}.$$
The proof is complete.
\end{proof}
Finally, we present an example to prove that we can not remove $Q = p^e$ in the previous theorem.
\begin{example} Let $R = \mathbb{F}_p[[X^4, X^3Y, XY^3, Y^4]]$. It is easy to see that $\dim R = 2$ and $e(R) = 4$. Moreover, we can check $H^1_{\frak m}(R) \cong \mathbb{F}_p$ and the Frobenius action on $H^1_{\frak m}(R)$ is nilpotent. Thus $HSL(H^1_{\frak m}(R)) = 1$. Let $S = \mathbb{F}_p[[X^4, X^3Y,X^2Y^2, XY^3, Y^4]]$, the integral closure of $R$. We have that $S$ is $F$-regular and $H^2_{\frak m}(R) \cong H^2_{\frak m}(S)$. Therefore $0^*_{H^2_{\frak m}(R)} = 0$ and $R$ is $F$-nilpotent. We have $Fte(R) \le 2$ by the main theorem of \cite{Q19}\footnote{We believe that $Fte(R) = 1$.}. We can not omit $Q^{v-d}$ in Theorem \ref{T3.2} (1) since 
$$e(R) = 4 > 3 = \binom{v-1}{d-1}.$$ 

\end{example}
\begin{acknowledgement} The authors are grateful to the referee for careful reading and valuable comments. 
\end{acknowledgement}

\end{document}